\newtheorem{nummer}{ }
\newtheorem{thm}[nummer]{\sc Theorem}
\newtheorem{lem}[nummer]{\sc Lemma}
\newtheorem{cor}[nummer]{\sc Corollary}
\newtheorem{algo}[nummer]{\sc Fermat's Algorithm}
\newcommand{\ie} {{\sl i.e.}}
\newcommand{\dritt}{\#}
\newcommand{\Q}{\mathds{Q}}
\newcommand{\Z}{\mathds{Z}}
\newcommand{\R}{\mathds{R}}
\newcommand{\THM}{\sc Theorem}
\newcommand{\FA}{\sc Fermat's Algorithm}
\newcommand{\LEM}{\sc Lemma}
\newcommand{\nO}{\mathscr{O}}
\newcommand{\tu}{\tilde u}
\newcommand{\tv}{\tilde v}
\newcommand{\tA}{\tilde A}
\newcommand{\tripleAbf}{\bf rational Pythagorean $\boldsymbol{A}$-triple}
\newcommand{\tripleA}{rational Py\-tha\-go\-re\-an $A$-triple}
\def\opargproof[#1]{\par\noindent {\bf #1 }}
\begin{document}
\begin{center}
{\LARGE\bf A Theorem of Fermat}\\[1.7ex] 
{\Large\bf on}\\[1.7ex] 
{\LARGE\bf Congruent Number Curves}

\medskip

{\small Lorenz Halbeisen}\\[1.2ex] 
{\scriptsize Department of Mathematics, ETH Zentrum,
R\"amistrasse\;101, 8092 Z\"urich, Switzerland\\ lorenz.halbeisen@math.ethz.ch}\\[1.8ex]
{\small Norbert Hungerb\"uhler}\\[1.2ex] 
{\scriptsize Department of Mathematics, ETH Zentrum,
R\"amistrasse\;101, 8092 Z\"urich, Switzerland\\ norbert.hungerbuehler@math.ethz.ch}
\end{center}

\hspace{5ex}{\small{\it key-words\/}: congruent numbers, Pythagorean triples}

\hspace{5ex}{\small{\it 2010 Mathematics Subject 
Classification\/}: {\bf 11G05}\,\ 11D25}

\begin{abstract}\noindent
A positive integer $A$ is called a {\it congruent number\/} if 
$A$ is the area of a right-angled triangle with three rational 
sides. Equivalently, $A$ is a {\it congruent number\/} 
if and only if the congruent number curve $y^2=x^3-A^2x$ has
a rational point $(x,y)\in\Q^2$ with $y\neq 0$.
Using a theorem of Fermat, we give an elementary proof for
the fact that congruent number curves do not contain rational 
points of finite order. 
\end{abstract}

\section{Introduction}

A positive integer $A$ is called a {\bf congruent number} if 
$A$ is the area of a right-angled triangle with three rational 
sides. So, $A$ is congruent if and only if there exists a
rational Pythagorean tripel $(a,b,c)$ (\ie, $a,b,c\in\Q$,
$a^2+b^2=c^2$, and $ab\neq 0$), such that $\frac{ab}2=A$.
The sequence of integer congruent numbers starts with
$$
5, 6, 7, 13, 14, 15, 20, 21, 22, 23, 24, 28, 29, 30, 31, 34, 37,\ldots
$$
For example, $A=7$ is a congruent number, 
witnessed by the rational Pythagorean triple $$\Bigl(\frac{24}{5}\,,
\frac{35}{12}\,,\frac{337}{60}\Bigr).$$

It is well-known that $A$ is a congruent number if
and only if the cubic curve $$C_A:\ y^2=x^3-A^2 x$$
has a rational point $(x_0,y_0)$ with $y_0\neq 0$.
The cubic curve $C_A$ is called a {\bf congruent number curve}.
This correspondence between rational points on congruent number curves and 
rational Pythagorean triples can be made explicit as follows:
Let 
$$
C(\Q):= \{(x,y,A)\in \Q\times\Q^*\times \Z^*:y^2=x^3-A^2x\},
$$
where $\Q^*:=\Q\setminus\{0\}, \Z^*:=\Z\setminus\{0\}$, and
$$
P(\Q):=\{(a,b,c,A)\in \Q^3\times\Z^*:a^2+b^2=c^2\ \textsl{and\/}\ ab=2A\}.
$$
Then, it is easy to check that
\begin{equation}\label{psi}
\begin{aligned}
\psi\ :\ \quad P(\Q)&\ \to\  C(\Q)\\
(a,b,c,A)&\ \mapsto \ \Bigl(\frac{A(b+c)}{a}\,,\,\frac{2A^2(b+c)}{a^2}\,,\,A\Bigr)
\end{aligned}
\end{equation}
is bijective  and
\begin{equation}\label{psi-1}
\begin{aligned}
\psi^{-1}\ :\qquad C(\Q)&\ \to\  P(\Q)\\
        (x,y,A)&\ \mapsto\  \Bigl(\frac{2x A}{y}\,,\;\frac{x^2-A^2}{y}\,,\;\frac{x^2+A^2}{y}\,,\,A\Bigr).
\end{aligned}
\end{equation}

For positive integers $A$, a triple $(a,b,c)$ of 
non-zero rational numbers is called a {\tripleAbf} 
if $a^2+b^2=c^2$ and $A=\big{|}\frac{ab}{2}\big{|}$.
Notice that if $(a,b,c)$ is a {\tripleA}, then $A$ 
is a congruent number and $|a|,|b|,|c|$ are the 
lengths of the sides of a right-angled triangle 
with area $A$. Notice also that we allow $a,b,c$
to be negative.

It is convenient to consider the curve $C_A$ in the
projective plane $\R P^2$, where the curve is given by
$$
C_A :\ y^2z = x^3-A^2xz^2.
$$
On the points of $C_A$, one can define a commutative, binary, 
associative operation ``$+$'', where $\nO$, the neutral 
element of the operation, is the projective point $(0,1,0)$
at infinity. More formally, if $P$ and $Q$ are two points on $C_A$, 
then let $P\dritt Q$ be the third intersection point of
the line through $P$ and $Q$ with the curve $C_A$. 
If $P=Q$, the line through $P$ and $Q$ is replaced by the tangent in $P$.
Then
$P+Q$ is defined by stipulating 
$$P+Q\;:=\;\nO\dritt (P\dritt Q),$$
where for a point $R$ on $C_A$, $\nO\dritt R$ is the point reflected across the $x$-axis.
The following figure shows the congruent number curve $C_A$ for
$A=5$, together with two points $P$ and $Q$ and their sum $P+Q$.
\begin{center}
\psset{xunit=.6cm,yunit=.4cm,algebraic=true,dimen=middle,dotstyle=o,dotsize=5pt 0,linewidth=1.6pt,arrowsize=3pt 2,arrowinset=0.25}
\begin{pspicture*}(-7.1329967371431815,-7.588280903625012)(8.234736979744335,9.360344080789211)
\psaxes[labelFontSize=\scriptstyle,xAxis=true,yAxis=true,Dx=2.,Dy=2.,ticksize=-2pt 2pt,subticks=1,linewidth=.6pt,]{->}(0,0)(-7.1329967371431815,-7.588280903625012)(8.234736979744335,9.360344080789211)
\psplotImp[linewidth=1.2pt,linecolor=blue,stepFactor=0.1](-9.0,-9.0)(9.0,10.0){1.0*y^2+25.0*x^1-1.0*x^3}
\psplot[linewidth=.6pt]{-7.1329967371431815}{8.234736979744335}{(-43.01566031988557-1.5476943995438228*x)/-7.215284285929719}
\psline[linewidth=.6pt,linestyle=dashed,dash=4pt 4pt](-4.388182328551535,-7.588280903625012)(-4.388182328551535,9.360344080789211)
\begin{small}
\psdots[dotsize=4pt 0](5.824738905621608,7.211160924647724)
\rput[bl](5.3,7.4){$Q$}
\psdots[dotsize=4pt 0](-1.3905453803081107,5.663466525103901)
\rput[bl](-1.3,6){$P$}
\psdots[dotsize=4pt 0](-4.388182328551535,5.020466785549749)
\rput[bl](-6.,5.2){$P\dritt Q$}
\psdots[dotsize=4pt 0](-4.388182328551535,-5.020466785549749)
\rput[bl](-6.3,-5.4){$P+Q$}
\end{small}
\end{pspicture*}
\end{center}
More formally, for two points $P=(x_0,y_0)$ and $Q=(x_1,y_1)$ on
a congruent number curve $C_A$, the point $P+Q=(x_2,y_2)$ is given by
the following formulas: 
\begin{itemize}
\item If $x_0\neq x_1$, then 
$$x_2=\lambda^2-x_0-x_1,\qquad y_2=\lambda(x_0-x_2)-y_0,$$
where $$\lambda:=\frac{y_1-y_0}{x_1-x_0}.$$
\item If $P=Q$, \ie, $x_0=x_1$ and $y_0=y_1$, then 
\begin{equation}\label{eq:2P}
x_2=\lambda^2-2x_0,\qquad y_2=3x_0\lambda-\lambda^3-y_0,
\end{equation}
where 
\begin{equation}\label{eq:lambda}
\lambda:=\frac{3x_0^2-A^2}{2y_0}.
\end{equation}
Below we shall write $2*P$ instead of $P+P$. 

\item If $x_0=x_1$ and $y_0=-y_1$, then $P+Q:=\nO$. In particular,
$(0,0)+(0,0)=(A,0)+(A,0)=(-A,0)+(-A,0)=\nO$. 
\item Finally,
we define $\nO+P:=P$ and $P+\nO:=P$ for any point $P$, in particular, 
$\nO+\nO=\nO$.
\end{itemize}
With the operation~``$+$'',
$(C_A,+)$ is an abelian group with neutral element $\nO$.
Let $C_A(\Q)$ be the set of rational points on $C_A$ together
with $\nO$. It is easy to see that $\bigl(C_A(\Q),+\bigr)$. 
is a subgroup of $(C_A,+)$. Moreover, it is well known that 
the group $\bigl(C_A(\Q),+\bigr)$ is finitely generated.
One can readily check that the three points $(0,0)$ and
$(\pm A,0)$ are the only points on $C_A$ of order~$2$,
and one easily finds other points of finite order on $C_A$.
But do we find also rational points of finite order on~$C_A$?
This question is answered by the following 

\begin{thm}\label{thm:main}
If $A$ is a congruent number and $(x_0,y_0)$ is a rational
point on $C_A$ with $y_0\neq 0$, then the order of $(x_0,y_0)$
is infinite. In particular, if there exists one {\tripleA}, then
there exist infinitely many such triples.
\end{thm}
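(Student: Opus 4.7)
The plan is to argue by contradiction, assuming $P=(x_0,y_0)\in C_A(\Q)$ with $y_0\neq 0$ has finite order, and deriving a contradiction from the classical theorem of Fermat that no perfect square is a congruent number. The first step is an explicit computation of $x(2*P)$ via~\eqref{eq:2P} and~\eqref{eq:lambda}: after replacing $y_0^2$ by $x_0^3-A^2x_0$, the numerator of $\lambda^2-2x_0$ should collapse to $(x_0^2+A^2)^2$, giving
\[
x(2*P)\;=\;\Bigl(\frac{x_0^2+A^2}{2y_0}\Bigr)^{\!2},
\]
which is a positive rational square whenever $y_0\neq 0$.

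The second step is a short case analysis forcing $y(2*P)\neq 0$. Since $2*P$ is a rational torsion point, either $y(2*P)\neq 0$ or $2*P\in\{\nO,(0,0),(\pm A,0)\}$. The cases $2*P=\nO$ (which would make $P$ itself $2$-torsion), $2*P=(0,0)$ (impossible by positivity of $x(2*P)$), and $2*P=(-A,0)$ (same reason) are immediate. The critical case $2*P=(A,0)$ forces $A=\bigl((x_0^2+A^2)/(2y_0)\bigr)^2$, so $A$ would be a rational square, hence---being a positive integer---a perfect square; this contradicts Fermat's theorem, since $A$ is a congruent number by hypothesis. Iterating, every $2^k\! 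P$ is a rational torsion point with non-zero $y$-coordinate. This already rules out $P$ having order a pure power of~$2$, because in that case the unique order-two element $2^{a-1}P$ of $\langle P\rangle$ would appear in the sequence with $y=0$, contradicting the iteration.

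The remaining case---and what I expect to be the main obstacle---is when $P$ has a nontrivial odd torsion component. Replacing $P$ by $2^a\!P$, where $2^a$ is the largest power of~$2$ dividing the order, we may assume $P$ has odd order $n>1$. Then $R:=\tfrac{n+1}{2}P\in\langle P\rangle$ is a rational point with $2*R=P$, so by the formula of Step~1 the coordinate $x(P)=x(2*R)$ is itself a positive rational square $u^2$. From $y_0^2=u^2(u^4-A^2)$ one then obtains a rational Pythagorean triple $(A,\,y_0/u,\,u^2)$ whose hypotenuse is a perfect square, and the plan is to close the argument by an infinite descent on triples of this kind---presumably the procedure packaged by the paper as ``Fermat's Algorithm''---which would produce an impossible strictly decreasing sequence of positive rationals. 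Once $P$ is known to have infinite order, the second assertion of the theorem is automatic: the points $kP$, $k\in\Z$, are then pairwise distinct and their images under $\psi^{-1}$ yield infinitely many distinct rational Pythagorean $A$-triples.
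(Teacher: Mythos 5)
Your Steps 1--3 are correct: the identity $x(2*P)=\bigl(\tfrac{x_0^2+A^2}{2y_0}\bigr)^2$ is right (and is exactly the quantity $c_0^2/4$ that the paper extracts via Lemma~\ref{lem:doubling}), and the case analysis showing $y(2*P)\neq 0$ is sound, provided you are willing to import Fermat's right-triangle theorem (no congruent number is a perfect square) to kill the case $2*P=(A,0)$ --- a legitimate elementary ingredient, though not the ``theorem of Fermat'' this paper is built on. This correctly disposes of $2$-power torsion.

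The odd-order case, however, is a genuine gap, and the descent you sketch cannot close it as stated. From $P=2*R$ you extract only that $x(P)=u^2$ is a rational square, whence $A^2+(y_0/u)^2=(u^2)^2$, i.e.\ a rational Pythagorean triple with one leg $A$ and square hypotenuse. But this configuration is \emph{not} contradictory: for $A=7$ the point $(25,120)$ lies on $C_7$, has $x=5^2$, and yields the triple $(7,24,25)$ with hypotenuse $25=5^2$ --- yet $(25,120)$ has infinite order. So no descent can refute the mere existence of such a triple; you would need the full $2$-divisibility data (that $x(P)$, $x(P)-A$ and $x(P)+A$ are \emph{all} squares, which is the classical criterion for $P\in 2C_A(\Q)$) to feed Fermat's descent. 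Moreover, ``an impossible strictly decreasing sequence of positive rationals'' is not an impossibility at all; a descent must bottom out in positive \emph{integers}. The paper sidesteps the entire torsion case analysis: it iterates Fermat's algorithm from the triple attached to $(x_0,y_0)$ and shows (Theorem~\ref{thm:FermatClaim}) that the power of $2$ in the hypotenuse $c_n$ strictly decreases at each step, so the $x$-coordinates $c_n^2/4$ of the iterated doublings are pairwise distinct --- which proves infinite order for every rational point with $y_0\neq 0$ in one stroke. Your final sentence (infinite order $\Rightarrow$ infinitely many triples via $\psi^{-1}$) is fine once the main claim is established.
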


The usual proofs of {\THM}\;\ref{thm:main} are quite involved. For example,
Koblitz~\cite[Ch.\,I, \S\,9, Prop.\,17]{Koblitz} gives a proof using
Dirichlet's theorem on primes in an arithmetic progression, and in
Chahal~\cite[Thm.\,3]{Chahal}, a proof is given using the Lutz-Nagell theorem,
which states that rational points of finite order are integral.
However, both results, Dirichlet's theorem and the Lutz-Nagell theorem, are 
quite deep results, and the aim of this article is to provide a simple proof
of {\THM}\;\ref{thm:main} which relies on an elementary theorem of Fermat.

\section{A Theorem of Fermat}

In~\cite{Fermat}, Fermat gives an algorithm to construct 
different right-angled triangles with three rational 
sides having the same area (see also Hungerb\"uhler~\cite{Noebi}).
Moreover, Fermat claims that his 
algorithm yields infinitely many distinct such right-angled 
triangles. However, he did not provide a proof for this 
claim. In this section, we first present Fermat's algorithm
and then we show that this algorithm delivers infinitely many
pairwise distinct rational right-angled triangles of the
same area.

\begin{algo}\label{algo}
Assume that $A$ is a congruent number, and
that $(a_0,b_0,c_0)$ is a {\tripleA},
\ie, $A=\big{|}\frac{a_0 b_0}2\big{|}$.  
Then 
\begin{equation}\label{eq:fermat}
a_1:=\frac{4c_0^2a_0b_0}{2c_0(a_0^2-b_0^2)},\quad
b_1:=\frac{c_0^4-4a_0^2b_0^2}{2c_0(a_0^2-b_0^2)},\quad
c_1:=\frac{c_0^4+4a_0^2b_0^2}{2c_0(a_0^2-b_0^2)},
\end{equation}
is also a {\tripleA}. Moreover, $a_0b_0=a_1b_1$, \ie, if $(a_0,b_0,c_0,A)\in P(\Q)$,
then $(a_1,b_1,c_1,A)\in P(\Q)$.
\end{algo}

\begin{proof}
Let $m:=c_0^2$, let $n:=2a_0b_0$, and let 
$$X:=2 m n,\quad Y:=m^2-n^2,\quad Z:=m^2+n^2,$$
in other words,
$$X=4c_0^2a_0b_0,\quad
Y=c_0^4-4a_0^2b_0^2,\quad
Z=c_0^4+4a_0^2b_0^2.$$
Then obviously, $X^2+Y^2=Z^2$, and since $a_0,b_0,c_0\in\Q$,
$\bigl(|X|,|Y|,|Z|\bigr)$ is a rational Pythagorean triple, where the
area of the corresponding right-angled triangle
is $$\tA=\bigg{|}\frac{X Y}{2}\bigg{|}=
\big{|}2 a_0 b_0 c_0^2 (c_0^4-4 a_0^2 b_0^2)\big{|}.$$ 
Since $a_0^2+b_0^2=c_0^2$, we get $c_0^4=(a_0^2+b_0^2)^2=a_0^4+2a_0^2b_0^2+b_0^4$
and therefore $$c_0^4-4 a_0^2 b_0^2\;=\;a_0^4-2a_0^2b_0^2+b_0^4\;=\;(a_0^2-b_0^2)^2>0.$$
So, for $$a_1=\frac{X}{2c_0(a_0^2-b_0^2)},\quad
b_1=\frac{Y}{2c_0(a_0^2-b_0^2)},\quad
c_1=\frac{Z}{2c_0(a_0^2-b_0^2)},$$
we have $a_1^2+b_1^2=c_1^2$ and
$$\frac{a_1b_1}{2}\;=\;
\frac{XY}{2\cdot 4c_0^2(a_0^2-b_0^2)^2}\;=\;
\frac{2 a_0 b_0 c_0^2 (c_0^4-4 a_0^2 b_0^2)}{4c_0^2(a_0^2-b_0^2)^2}\;=\;
\frac{2 a_0 b_0 c_0^2 (a_0^2-b_0^2)^2}{4c_0^2(a_0^2-b_0^2)^2}\;=\;
\frac{a_0 b_0}{2}.
$$
\end{proof}

\begin{thm}\label{thm:FermatClaim} 
Assume that $A$ is a congruent number,
that $(a_0,b_0,c_0)$ is a {\tripleA},
and for positive integers~$n$, let
$(a_n,b_n,c_n)$ be the {\tripleA}
we obtain by {\FA} from $(a_{n-1},b_{n-1},c_{n-1})$.
Then for any distinct non-negative integers $n,n'$, we have
$|c_n|\neq |c_{n'}|$. 
\end{thm}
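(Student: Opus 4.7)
My plan is to show directly that the denominator $q_n$ of $c_n$ in lowest terms is strictly increasing with $n$. Since two rationals with distinct denominators in lowest terms have distinct absolute values, this immediately implies $|c_n|\neq|c_{n'}|$ for all distinct non-negative integers $n,n'$.

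Write $c_n=p_n/q_n$ in lowest terms with $q_n>0$. Combining $a_n^2+b_n^2=c_n^2$ with $a_nb_n=\pm 2A$ yields $(a_n^2-b_n^2)^2=c_n^4-16A^2\geq 0$; clearing denominators shows that $p_n^4-16A^2q_n^4$ is a non-negative integer which is moreover a perfect square, say equal to $r_n^2$ with $r_n\in\Z_{\geq 0}$. Substituting into~\eqref{eq:fermat} then gives
\[
|c_{n+1}|\;=\;\frac{p_n^4+16A^2q_n^4}{2\,p_nq_n\,r_n}.
\]
Setting $g:=\gcd\bigl(p_n^4+16A^2q_n^4,\,2p_nq_nr_n\bigr)$, the denominator of $c_{n+1}$ in lowest terms is $q_{n+1}=2p_nq_nr_n/g$. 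Since $\gcd(p_n,q_n)=1$ forces $\gcd(p_n^4+16A^2q_n^4,q_n)=1$, $g$ is coprime to $q_n$, hence $g\mid 2p_nr_n$ and therefore $q_{n+1}=q_n\cdot(2p_nr_n/g)$ is an integer multiple of $q_n$.

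It therefore remains to show that the multiplier $2p_nr_n/g$ is strictly greater than $1$, i.e.\ that $g=2p_nr_n$ cannot hold. A short divisibility analysis establishes that the equality $g=2p_nr_n$ would force $p_n$ to be even, $p_n\mid 16A^2$, and $r_n\mid 32A^2$; together with the relation $r_n^2=p_n^4-16A^2q_n^4$, these constraints pin $(p_n,q_n,r_n)$ down to finitely many configurations depending only on $A$, each of which can be excluded by checking that it does not correspond to a non-degenerate rational Pythagorean $A$-triple (in particular, to $r_n>0$). The chief technical obstacle will be cleanly carrying out this last case-check: the $2$-adic and $\ell$-adic (for $\ell\mid A$) valuations of $p_n^4\pm 16A^2q_n^4$ must be tracked simultaneously, and it is here that the argument most closely mirrors the classical method of infinite descent of Fermat.
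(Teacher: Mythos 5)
Your setup is sound and the first half of the argument is correct: with $c_n=p_n/q_n$ in lowest terms and $r_n^2=p_n^4-16A^2q_n^4$, the identity $|c_{n+1}|=\frac{p_n^4+16A^2q_n^4}{2|p_n|q_nr_n}$ is right, and since $\gcd\bigl(p_n^4+16A^2q_n^4,\,q_n\bigr)=1$ you do get $q_n\mid q_{n+1}$. But the proof has a genuine gap exactly where you flag it: you never actually show that the multiplier $2|p_n|r_n/g$ exceeds $1$. The reduction to ``finitely many configurations'' does not close this. First, the configurations depend on $A$, and the theorem quantifies over \emph{all} congruent numbers $A$, so a per-$A$ finite enumeration is not a proof; you would need a uniform argument. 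Second, the one exclusion criterion you name, $r_n>0$, is \emph{automatically} satisfied for every rational Pythagorean $A$-triple (if $a_n^2=b_n^2$ then $A$ and $2A$ would both be rational squares, which is impossible for a positive integer $A$), so it rules out nothing. Third, ``excluding'' a configuration $(p_n,q_n,r_n)$ really means showing that no triple in the Fermat orbit has that hypotenuse, which is itself a nontrivial Diophantine statement. Note also that in the easy subcase where $p_n$ is odd your argument is already complete ($p_n^4+16A^2q_n^4$ is odd, so $g$ is odd and the multiplier is at least $2$); the entire difficulty is concentrated in the case $p_n$ even, and there your divisibility constraints ($p_n\mid 16A^2$, $r_n\mid 32A^2$) do not by themselves produce a contradiction.

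For comparison, the paper sidesteps all of this by tracking a single invariant, the $2$-adic valuation of $c_n$: writing $c_n=2^k\cdot\frac{u}{v}$ with $u,v$ odd, it shows $|c_{n+1}|=2^{k'}\cdot\frac{u'}{v'}$ with $k'<k$ in both parity cases, and strictly decreasing $2$-adic valuations already force the $|c_n|$ to be pairwise distinct. Your case ``$p_n$ even'' is precisely where that $2$-adic bookkeeping does the work that your divisor constraints cannot; if you want to salvage your write-up, the cleanest fix is to abandon the claim that the full denominator grows and instead prove the weaker (and sufficient) statement that $v_2(c_{n+1})<v_2(c_n)$, which is an elementary valuation computation on $p_n^4\pm16A^2q_n^4$.
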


\begin{proof} Let $n$ be an arbitrary but fixed
non-negative integer. Since $A=\big{|}\frac{a_n b_n}2\big{|}$, 
we have $2A=|a_nb_n|$, and consequently 
\begin{equation}\label{*}
a_n^2b_n^2=4A^2.
\end{equation} 
Furthermore, 
since $a_n^2+b_n^2=c_n^2$, we have 
$$(a_n^2+b_n^2)^2=a_n^4+2a_n^2b_n^2+b_n^4=a_n^4+8A^2+b_n^4=c_n^4,$$
and consequently we get
$$c_n^4-16A^2=a_n^4-8A^2+b_n^4=a_n^4-2a_n^2b_n^2+b_n^4=(a_n^2-b_n^2)^2>0.$$
Therefore, $$\sqrt{(a_n^2-b_n^2)^2}=|a_n^2-b_n^2|=\sqrt{c_n^4-16A^2},$$
and with~(\ref{eq:fermat}) and~(\ref{*}) we finally have 
$$|c_{n+1}|=\frac{c_n^4+16A^2}{2c_n\sqrt{c_n^4-16A^{2\mathstrut}}}\,.$$
Now, assume that $c_n=\frac uv$ where $u$ and $v$ are in lowest terms.
We consider the following two cases:

{\it $u$ is odd\/}: First, we write $v=2^k\cdot\tv$, where $k\ge 0$
and $\tv$ is odd. In particular, $c_n=\frac{u}{2^{k\mathstrut}\cdot\tv}$. 
Since $c_{n+1}$ is rational, $\sqrt{c_n^4-16A^2}\in\Q$.
So, $$\sqrt{c_n^4-16A^2}=\sqrt{\frac{u^4-16A^2v^4}{v^4}}=\frac{\tu}{v^2}$$
for a positive odd integer $\tu$. Then 
$$|c_{n+1}|=\frac{\frac{u^4+16A^2v^4}{v^4}}{\frac{2u\tu}{v^3}}=
\frac{\bar{u}}{2u\tu v}=
\frac{\bar{u}}{2u\tu 2^{k\mathstrut}\tv}=\frac{\bar{u}}{2^{k+1}u\tu\tv}
=\frac{u'}{2^{k+1\mathstrut}\cdot v'}$$
where $\bar{u},u',v'$ are odd integers and $\gcd(u',v')=1$. This shows
that $$c_n=\frac{u}{2^{k\mathstrut}\cdot\tv}\quad\Rightarrow\quad 
|c_{n+1}|=\frac{u'}{2^{k+1\mathstrut}\cdot v'}$$ where $u,\tv,u',v'$ are odd.

{\it $u$ is even\/}: First, we write $u=2^k\cdot\tu$, where $k\ge 1$
and $\tu$ is odd. In particular, $c_n=\frac{2^k\cdot\tu}{v}$, where $v$ is odd.
Similarly, $A=2^l\cdot\tA$, where $l\ge 0$ and $\tA$ is odd.
Then $$c_n^4\pm 16A^2=\frac{2^{4k}\cdot\tu^4\pm 2^{4+2l}\tA^2 v^4}{v^4},$$ 
where both numbers are of the form $$\frac{2^{2m}\bar{u}}{v^4}\,,$$
where $\bar{u}$ is odd and $4\le 2m\le 4k$, \ie, $2\le m\le 2k$. 
Therefore, $$|c_{n+1}|=\frac{2^{2m}u_0\cdot v^3}{2\cdot 2^k\tu\cdot
v^4\cdot2^{m\mathstrut}u_1}=\frac{2^{m-k-1\mathstrut}\cdot u'}{v'},$$
where $u_0,u_1,u',v'$ are odd. Since $m<2k+1$, we have $m-k-1<k$, and
therefore we obtain
$$c_n=\frac{2^k\cdot\tu}{v}\quad\Rightarrow\quad 
|c_{n+1}|=\frac{2^{k'}\cdot u'}{v'}$$ where $\tu,v,u',v'$ are odd and
$0\le k'<k$. 

Both cases together show that whenever $c_n=2^k\cdot\frac uv$, where
$k\in\Z$ and $u,v$ are odd, then $|c_{n+1}|=2^{k'}\cdot\frac{u'}{v'}$,
where $u',v'$ are odd and $k'<k$. So, for any distinct non-negative 
integers $n$ and $n'$, $|c_n|\neq |c_{n+1}|$. 
\end{proof}

The proof of {\THM}\;\ref{thm:FermatClaim} gives us the following
reformulation of {\FA}:

\begin{cor}\label{cor:FA}
Assume that $A$ is a congruent number, and
that $(a_0,b_0,c_0)$ is a {\tripleA},
\ie, $A=\big{|}\frac{a_0 b_0}2\big{|}$.  
Then 
$$a_1=\frac{4Ac_0}{\sqrt{c_0^4-16A^2}},\quad
b_1=\frac{\sqrt{c_0^4-16A^2}}{2c_0},\quad
c_1=\frac{c_0^4+16A^2}{2c_0\sqrt{c_0^4-16A^2}},
$$
is also a {\tripleA}.
\end{cor}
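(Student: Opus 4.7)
The plan is to derive the formulas in the corollary directly from those of \FA{} by substituting three identities already established (either in the statement of the algorithm or in the proof of \THM{}\;\ref{thm:FermatClaim}), and arguing that any sign discrepancies are harmless since a \tripleA{} is allowed to have negative entries.

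First I would use $A=\bigl|\tfrac{a_0b_0}{2}\bigr|$ to write $a_0b_0=\pm 2A$, so that the numerator of the formula for $a_1$ in~(\ref{eq:fermat}) simplifies as $4c_0^2a_0b_0=\pm 8Ac_0^2$. Next, I would recall the identity derived in the proof of \THM{}\;\ref{thm:FermatClaim}, namely
$$c_0^4-16A^2=(a_0^2-b_0^2)^2,$$
which follows by combining $a_0^2+b_0^2=c_0^2$ with $a_0^2b_0^2=4A^2$. In particular, $|a_0^2-b_0^2|=\sqrt{c_0^4-16A^2}$. With these in hand, the three formulas of \FA{} become, up to signs,
$$a_1=\frac{\pm 8Ac_0^2}{2c_0(a_0^2-b_0^2)}=\frac{\pm 4Ac_0}{a_0^2-b_0^2},\quad
b_1=\frac{(a_0^2-b_0^2)^2}{2c_0(a_0^2-b_0^2)}=\frac{a_0^2-b_0^2}{2c_0},\quad
c_1=\frac{c_0^4+16A^2}{2c_0(a_0^2-b_0^2)},$$
so taking absolute values of the denominator (equivalently, choosing the sign of the triple appropriately) yields exactly the expressions asserted in the corollary.

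The only subtle step is the sign bookkeeping: the formula for $b_1$ in the corollary is positive, whereas $a_0^2-b_0^2$ may be negative, and similarly the $\pm$ in the numerator of $a_1$ has to be absorbed. Since the definition of a \tripleA{} explicitly permits negative entries and only the condition $a_1^2+b_1^2=c_1^2$ together with $|a_1b_1|=2A$ matters, this is innocuous, and the corollary follows. There is no deep obstacle — the content is purely that the formulas of \FA{} can be rewritten using $A$ and $c_0$ alone, with $a_0$ and $b_0$ eliminated via the two identities above.
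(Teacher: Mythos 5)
Your proposal is correct and follows essentially the same route as the paper, whose proof consists precisely of the two observations $c_0^4-4a_0^2b_0^2=c_0^4-16A^2$ (from $a_0^2b_0^2=4A^2$) and $|a_0^2-b_0^2|=\sqrt{c_0^4-16A^2}$, substituted into~(\ref{eq:fermat}). Your additional remark that the residual sign ambiguities are harmless because a {\tripleA} may have negative entries is a point the paper leaves implicit, but it does not change the argument.
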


\begin{proof} Notice that $c_0^4-4a_0^2b_0^2=c_0^4-16A^2$ 
and recall that $|a_0^2-b_0^2|=\sqrt{c_0^4-16A^2}$.
\end{proof}

\section{Doubling points with Fermat's Algorithm}

Before we prove {\THM}\;\ref{thm:main} (\ie,
that congruent number curves do not 
contain rational points of finite order), we first prove
that {\FA}\;\ref{algo} is essentially doubling points
on congruent number curves.

\begin{lem}\label{lem:doubling}
Let $A$ be a congruent number, let $(a_0,b_0,c_0)$ 
be a {\tripleA}, and let $(a_1,b_1,c_1)$ be
the {\tripleA} obtained by {\FA} from $(a_0,b_0,c_0)$.
Furthermore, let $(x_0,y_0)$ and $(x_1,y_1)$ be the rational
points on the curve $C_A$ which correspond to 
$(a_0,b_0,c_0)$ and $(a_1,b_1,c_1)$, respectively.
Then we have 
$$
2*(x_0,y_0)=(x_1,-y_1).$$
\end{lem}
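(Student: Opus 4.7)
The goal is to verify that one step of Fermat's algorithm corresponds, via the bijection $\psi$ of~(\ref{psi}), to point doubling on $C_A$. My plan is to compute both $2*(x_0,y_0)$ and $(x_1,-y_1)$ as explicit expressions in the underlying data and match them coordinate by coordinate. The useful leverage for this calculation is the curve equation $y_0^2=x_0(x_0^2-A^2)$ combined with the relations $a_0 b_0 = 2A$ and $a_0^2+b_0^2=c_0^2$.

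First, I would simplify $(x_1,y_1)=\psi(a_1,b_1,c_1,A)$. From~(\ref{eq:fermat}) the common denominator $2c_0(a_0^2-b_0^2)$ cancels between $a_1$ and $b_1+c_1=c_0^3/(a_0^2-b_0^2)$; combined with $a_0 b_0=2A$, this collapses to the strikingly clean forms
$$x_1=\frac{c_0^2}{4}\,,\qquad y_1=\frac{c_0(a_0^2-b_0^2)}{8}\,.$$

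Next, I would evaluate $(x_2,y_2):=2*(x_0,y_0)$ via~(\ref{eq:2P})--(\ref{eq:lambda}). For the $x$-coordinate, expanding $\lambda^2-2x_0$ and using the curve equation to eliminate $y_0^2$ yields
$$\lambda^2-2x_0\;=\;\frac{(3x_0^2-A^2)^2-8x_0^2(x_0^2-A^2)}{4x_0(x_0^2-A^2)}\;=\;\frac{(x_0^2+A^2)^2}{4x_0(x_0^2-A^2)},$$
which is exactly $c_0^2/4$ since $\psi^{-1}$ in~(\ref{psi-1}) gives $c_0=(x_0^2+A^2)/y_0$; hence $x_2=x_1$. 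For $y_2=\lambda(x_0-x_2)-y_0$, clearing the denominator $8y_0^3$ leaves the polynomial numerator $x_0^6-5x_0^4 A^2-5x_0^2 A^4+A^6$, which factors as $(x_0^2+A^2)(x_0^4-6x_0^2 A^2+A^4)$; re-expressing these factors via $\psi^{-1}$ in terms of $a_0,b_0,c_0$ (noting that $x_0^4-6x_0^2 A^2+A^4=(b_0^2-a_0^2)\,y_0^2$) then shows that $y_2=-y_1$.

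The main obstacle I anticipate is the $y$-coordinate step: the polynomial identity to be checked has noticeably higher degree than the one for $x_2$, and one must carefully track signs when translating back through $\psi^{-1}$, since $\psi^{-1}(x,-y,A)$ differs from $\psi^{-1}(x,y,A)$ by a global sign flip in the first three entries. This is exactly what makes $-y_1$ (rather than $y_1$) the correct sign in the statement of the lemma. Once the factorization above is in hand, everything else is routine bookkeeping.
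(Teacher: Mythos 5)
Your proposal is correct, and its overall strategy is the same as the paper's: compute $(x_1,y_1)=\psi(a_1,b_1,c_1,A)$ explicitly --- your values $x_1=c_0^2/4$ and $y_1=\tfrac18 c_0(a_0^2-b_0^2)$ agree with the paper's --- and compare with the duplication formulas (\ref{eq:2P})--(\ref{eq:lambda}). The one genuine difference is where the doubling computation lives. The paper substitutes the parametrization $x_0=\tfrac12 b_0(b_0+c_0)$, $y_0=\tfrac12 b_0^2(b_0+c_0)$ from (\ref{psi}) and carries everything out in the variables $a_0,b_0,c_0$, the key simplification being $\lambda=\tfrac12(2b_0+c_0)$. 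You instead stay in the $(x_0,y_0,A)$-coordinates, use the curve equation to eliminate $y_0^2$, and translate back through $\psi^{-1}$ only at the end; the price is a degree-six polynomial identity in place of the paper's linear factorization of $\lambda$, but the payoff is that the intermediate formulas, notably $x_2=\frac{(x_0^2+A^2)^2}{4y_0^2}$, are the standard duplication identities for $C_A$ and are reusable independently of the triple. All of your intermediate claims check out: $(3x_0^2-A^2)^2-8x_0^2(x_0^2-A^2)=(x_0^2+A^2)^2$; the numerator of $y_2$ is $x_0^6-5x_0^4A^2-5x_0^2A^4+A^6=(x_0^2+A^2)(x_0^4-6x_0^2A^2+A^4)$; and $x_0^2+A^2=c_0y_0$ together with $x_0^4-6x_0^2A^2+A^4=(b_0^2-a_0^2)y_0^2$ give $y_2=\tfrac18 c_0(b_0^2-a_0^2)=-y_1$. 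Your closing remark on signs is well taken; note only that, exactly as in the paper's proof, the identification of $(a_0,b_0,c_0)$ with $\psi^{-1}(x_0,y_0,A)$ presupposes $a_0b_0=2A$ (i.e.\ membership in $P(\Q)$), which is how the lemma's ``correspond'' is to be read.
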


\begin{proof}

Let $(a_0,b_0,c_0)$ be a {\tripleA}. Then, according to~(\ref{eq:fermat}), the 
{\tripleA} $(a_1,b_1,c_1)$ which we obtain by {\FA} is given
by $$a_1:=\frac{4c_0^2a_0b_0}{2c_0(a_0^2-b_0^2)},\quad
b_1:=\frac{c_0^4-4a_0^2b_0^2}{2c_0(a_0^2-b_0^2)},\quad
c_1:=\frac{c_0^4+4a_0^2b_0^2}{2c_0(a_0^2-b_0^2)}.
$$ 
Now, by~(\ref{psi}), the coordinates of the rational point 
$(x_1,y_1)$ on $C_A$ which corresponds to the {\tripleA}
$(a_1,b_1,c_1)$ are given by 
\begin{align*}
 x_1&=
\frac{a_0b_0\cdot(b_1+c_1)}{2\cdot a_1}=\frac{a_0b_0\cdot 2c_0^4}{2\cdot 4c_0^2a_0b_0}=
\frac{c_0^2}{4}\,,\\
y_1&=\frac{2(\frac{a_0b_0}2)^2(b_1+c_1)}{a_1^2}=\frac18 (a_0^2 - b_0^2) c_0.
\end{align*}
Let still $(a_0,b_0,c_0)$ be a {\tripleA}. Then, again by~(\ref{psi}), the corresponding
rational point $(x_0,y_0)$ on $C_A$ is given by 
$$x_0=\frac{b_0(b_0+c_0)}{2}\,,\qquad
y_0=\frac{b_0^2(b_0+c_0)}{2}\,.$$
Now, as we have seen in~(\ref{eq:2P}) and~(\ref{eq:lambda}),
the coordinates of the point $(x'_1,y'_1):=2*(x_0,y_0)$ are given by
$x'_1=\lambda^2-2x_0$, $y'_1=3x_0\lambda-\lambda^3-y_0$, where 
\begin{multline*}
 \lambda=\frac{3x_0^2-(\frac{a_0b_0}2)^2}{2y_0}=
 \frac{\frac{3 b_0^2(b_0+c_0)^2-a_0^2b_0^2}{4}}{b_0^2(b_0+c_0)}=
 \frac{3 (b_0+c_0)^2-a_0^2}{4(b_0+c_0)}=
 \frac{3 (b_0+c_0)^2+(b_0^2-c_0^2)}{4(b_0+c_0)}=\\[3ex]
 \frac{(3b_0^2+6b_0c_0+3c_0^2)+(b_0^2-c_0^2)}{4(b_0+c_0)}=
 \frac{4b_0^2+6b_0c_0+2c_0^2}{4(b_0+c_0)}=
 \frac{2b_0^2+3b_0c_0+c_0^2}{2(b_0+c_0)}=\\[3ex]
 \frac{(2b_0+c_0)(b_0+c_0)}{2(b_0+c_0)}=
 \frac{(2b_0+c_0)}{2}\,.
\end{multline*}
Hence, 
$$x_1'=\lambda^2-2x_0=\frac{(2b_0+c_0)^2}{4}-b_0(b_0+c_0)=
\frac{(4b_0^2+4b_0c_0+c_0^2)-(4b_0^2+4b_0c_0)}{4}=\frac{c_0^2}{4}\,
$$
and
$$
y'_1=3x_0\lambda-\lambda^3-y_0=\frac18 (2 b_0^2 c_0 - c_0^3)=\frac18(b_0^2-a_0^2)c_0,
$$
\ie, $x_1=x'_1$ and $y_1=-y'_1$, as claimed.
\end{proof}

With {\LEM}\;\ref{lem:doubling}, we are now able to
prove {\THM}\;\ref{thm:main}, which states that
for a congruent number $A$, the curve 
$C_A:y^2=x^3-A^2x$ does not have rational points of finite 
order other than $(0,0)$ and $(\pm A,0)$. 

\begin{proof}[Proof of Theorem\;\ref{thm:main}]
Assume that $A$ is a congruent number,
let $(x_0,y_0)$ be a rational point on $C_A$ which
$y_0\neq 0$, and let $(a_0,b_0,c_0)$ be the {\tripleA}
which corresponds to $(x_0,y_0)$ by~(\ref{psi-1}).
Furthermore, for positive integers~$n$, let
$(a_n,b_n,c_n)$ be the {\tripleA}
we obtain by {\FA} from $(a_{n-1},b_{n-1},c_{n-1})$,
and let $(x_n,y_n)$ be the 
rational point on $C_A$ which corresponds to the 
{\tripleA} $(a_n,b_n,c_n)$ by~(\ref{psi}). 

By the proof of {\LEM}\;\ref{lem:doubling} we know that the
$x$-coordinate of $2*(x_n,y_n)$ is equal to $\frac{c_n^2}{4}$,
and by {\THM}\;\ref{thm:FermatClaim} we have that for any distinct 
non-negative integers $n,n'$, $|c_n|\neq |c_{n'}|$. 
Hence, for all distinct 
non-negative integers $n,n'$ we have 
$$(x_n,y_n)\neq (x_{n'},y_{n'}),$$ which shows that the order
of $(x_0,y_0)$ is infinite.
\end{proof}

\providecommand{\bysame}{\leavevmode\hbox to3em{\hrulefill}\thinspace}
\providecommand{\MR}{\relax\ifhmode\unskip\space\fi MR }
\providecommand{\MRhref}[2]{%
  \href{http://www.ams.org/mathscinet-getitem?mr=#1}{#2}
}
\providecommand{\href}[2]{#2}

\end{document}